\def\R{{\mathbb R}}
\def\argmin{{\rm argmin}}
\begin{document}

\title{Convergence Rate of Projected Subgradient Method with Time-varying Step-sizes}


\titlerunning{On Step-size of Subgradient Method}        

\author{Zhihan Zhu \and Yanhao Zhang \and Yong Xia 
}

\institute{Zhihan Zhu \and Yanhao Zhang \and Yong Xia (corresponding author) \at School of Mathematical Sciences, Beihang University, Beijing 100191, People's Republic of China.
	\email{\{zhihanzhu,\ yanhaozhang,\ yxia\}@buaa.edu.cn}
	  }     

\date{Received: date / Accepted: date}
\maketitle

\begin{abstract}
We establish the optimal ergodic convergence rate for the classical projected subgradient method with  time-varying step-sizes. This convergence rate remains the same even if we slightly increase the weight of the most recent points, thereby relaxing the ergodic sense.
	\keywords{Subgradient method \and step-size \and ergodic convergence rate \and nonsmooth convex optimization}
	\subclass{90C25, 90C30}
\end{abstract}

\section{Introduction}
Consider the nonsmooth convex optimization problem
\begin{equation*}
	x^*\in\argmin_{x\in\mathcal{X}}f(x),
\end{equation*}
where $\mathcal{X}\subset\R^n$ is a compact convex set that is contained within the Euclidean ball $B(x^*,R)$, and $f$ is (possibly nonsmooth) convex and Lipschitz on $\mathcal{X}$, i.e., there is an $L>0$ such that $\Vert g\Vert\le L$ for any $g\in\partial f(x)\neq\emptyset$ and $x\in \mathcal{X}$.

The classical projected subgradient method (PSG) iterates the following equations for $t\ge 1$:
\begin{equation*}
\left\{	\begin{aligned}
		y_{t+1}&=x_t-\eta_t g_t, ~\text{where } g_t\in\partial f(x_t),\\
		x_{t+1}&=  \argmin_{x\in\mathcal{X}}\Vert x-y_{t+1} \Vert.	\end{aligned} \right.
\end{equation*}
Utilizing  the following constant step size
	\begin{equation}
		\eta_s\equiv\frac{R}{L\sqrt{t}}, ~s=1,\cdots,t,  \label{size1}
	\end{equation}	
PSG achieves an optimal ergodic convergence rate (see, for example, \cite{nesterov2018lectures,bubeck2015convex,MITlecture}) expressed as \footnote{ $f ((\sum_{s=1}^t x_s)/t)$ can be replaced with $\min_{s=1,\cdots,t}f(x_s)$ based on similar analysis.}
\begin{equation}
		f\left(\frac{\sum_{s=1}^t x_s}{t}\right)-f(x^*)\le \frac{RL}{\sqrt{t}}. \label{rate1}
\end{equation}

A more practical approach is to use a step-size that monotonically decreases towards $0$.
One such time-varying step-size, motivated by \eqref{size1} and suggested in \cite{nesterov2018lectures,bubeck2015convex}, is expressed as follows:
\begin{equation}
	\eta_s=\frac{R}{L\sqrt{s}},~s=1,\cdots,t.\label{size2}
\end{equation}
However, using this step size results in sub-optimal ergodic convergence rate \cite{nesterov2018lectures,bubeck2015convex}, as it adds an additional $\log(t)$ factor compared to the right-hand side of \eqref{rate1}.

The contribution of this note is to demonstrate that PSG with the time-varying step-size \eqref{size2} indeed achieves the following optimal convergence rate.
\begin{theorem} \label{main}
PSG with the time-varying step-size \eqref{size2} satisfies
 \begin{equation*}
	f\left(\frac{\sum_{s=1}^t x_s}{t}\right)-f(x^*)\le \frac{3RL}{2\sqrt{t}}.
\end{equation*}
 \end{theorem}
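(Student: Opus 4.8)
The plan is to run the standard one-step analysis of the projected subgradient method, but to sum the resulting per-iteration inequalities \emph{without} reweighting by $\eta_s$, exploiting instead the uniform bound $\|x_s-x^*\|\le R$ that follows from $\mathcal{X}\subset B(x^*,R)$.

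First I would use nonexpansiveness of the Euclidean projection onto $\mathcal{X}$ together with the update rule to obtain, for every $s\ge 1$,
\[
\|x_{s+1}-x^*\|^2\le\|x_s-x^*\|^2-2\eta_s\langle g_s,x_s-x^*\rangle+\eta_s^2\|g_s\|^2.
\]
Combining the subgradient inequality $\langle g_s,x_s-x^*\rangle\ge f(x_s)-f(x^*)$ with $\|g_s\|\le L$, and dividing by $2\eta_s$, gives
\[
f(x_s)-f(x^*)\le\frac{\|x_s-x^*\|^2-\|x_{s+1}-x^*\|^2}{2\eta_s}+\frac{\eta_s L^2}{2}.
\]

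Next I would sum this over $s=1,\dots,t$. The error-term contribution is $\tfrac{L^2}{2}\sum_{s=1}^t\eta_s=\tfrac{RL}{2}\sum_{s=1}^t s^{-1/2}\le RL\sqrt t$, using the elementary estimate $\sum_{s=1}^t s^{-1/2}\le 2\sqrt t$. For the telescoping-type contribution I would apply summation by parts: writing $d_s:=\|x_s-x^*\|^2$ and $\eta_s^{-1}=L\sqrt s/R$,
\[
\sum_{s=1}^t\frac{d_s-d_{s+1}}{2\eta_s}=\frac12\left(\frac{d_1}{\eta_1}-\frac{d_{t+1}}{\eta_t}+\sum_{s=2}^t d_s\left(\frac{1}{\eta_s}-\frac{1}{\eta_{s-1}}\right)\right).
\]
Since $d_{t+1}\ge 0$, each increment $\eta_s^{-1}-\eta_{s-1}^{-1}>0$, and crucially $d_s\le R^2$ for every $s$ (because $x_s\in\mathcal{X}\subset B(x^*,R)$), the right-hand side is at most $\tfrac{R^2}{2}\big(\eta_1^{-1}+\sum_{s=2}^t(\eta_s^{-1}-\eta_{s-1}^{-1})\big)=\tfrac{R^2}{2\eta_t}=\tfrac{RL\sqrt t}{2}$. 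Adding the two contributions yields $\sum_{s=1}^t\big(f(x_s)-f(x^*)\big)\le \tfrac{3RL\sqrt t}{2}$.

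Finally, by Jensen's inequality (convexity of $f$), $f\big(\tfrac1t\sum_{s=1}^t x_s\big)-f(x^*)\le\tfrac1t\sum_{s=1}^t\big(f(x_s)-f(x^*)\big)\le\tfrac{3RL}{2\sqrt t}$, which is the assertion. The one delicate point — and precisely where the usual textbook argument loses a $\log t$ factor — is the handling of the telescoping term: the standard proof retains the weights $\eta_s$ and is therefore forced to control $\sum_{s=1}^t\eta_s^2\propto\sum_{s=1}^t s^{-1}$, whereas here summing the \emph{unweighted} inequalities and invoking the diameter bound $d_s\le R^2$ collapses the variable-weight sum to $R^2/(2\eta_t)$. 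The remainder is bookkeeping; the only care needed is tracking the constant $3/2$ and the bound $\sum_{s=1}^t s^{-1/2}\le 2\sqrt t$.
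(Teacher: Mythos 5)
Your proposal is correct and is essentially the paper's own argument: the paper proves the general weighted bound (Theorem \ref{Convergence}) via the same one-step inequality followed by Abel summation with the uniform bound $\Vert x_s-x^*\Vert\le R$, and Theorem \ref{main} is its $k=0$ case, which is exactly the unweighted sum you carry out. Your constants and the estimate $\sum_{s=1}^t s^{-1/2}\le 2\sqrt{t}$ check out, so the proof is complete as written.
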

In Section 2, we present a more generalized convergence analysis, allowing us to provide some insightful observations.

\section{Analysis}
Consider PSG with a general step-size $\eta_s$, which is
assumed to be positive and non-increasing. We can establish the following convergence rate.
\begin{theorem}\label{Convergence}
For any fixed $k\ge -1$,
PSG with a positive and non-increasing step-size sequence $\{\eta_s\}$ satisfies
	\begin{equation}
		f\left(\frac{\sum_{s=1}^t \frac{1}{\eta_s^k}x_s}{\sum_{s=1}^t \frac{1}{\eta_s^k}}\right)-f(x^*)\le \frac{\frac{R^2}{\eta_t^{k+1}}+ L^2 \sum_{s=1}^t\frac{1}{\eta_s^{k-1}}}{2\sum_{s=1}^t\frac{1}{\eta_s^k}}. \label{bound}
	\end{equation}
\end{theorem}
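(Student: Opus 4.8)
The plan is to run the textbook one-step subgradient estimate, then combine the per-iteration bounds with the weights $1/\eta_s^{k}$ and use summation by parts to control the telescoping-like term. Write $d_s:=\|x_s-x^*\|^2$, so that $0\le d_s\le R^2$ because $x_s\in\mathcal{X}\subset B(x^*,R)$. Since projection onto the convex set $\mathcal{X}$ is nonexpansive and $x^*\in\mathcal{X}$,
\begin{equation*}
d_{s+1}=\|x_{s+1}-x^*\|^2\le\|y_{s+1}-x^*\|^2=d_s-2\eta_s\langle g_s,x_s-x^*\rangle+\eta_s^2\|g_s\|^2 .
\end{equation*}
Using the subgradient inequality $\langle g_s,x_s-x^*\rangle\ge f(x_s)-f(x^*)$ together with $\|g_s\|\le L$, and rearranging, I get the familiar bound
\begin{equation*}
f(x_s)-f(x^*)\le\frac{d_s-d_{s+1}}{2\eta_s}+\frac{\eta_s L^2}{2}.
\end{equation*}

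Next I would multiply this inequality by the positive weight $1/\eta_s^{k}$ and sum over $s=1,\dots,t$, obtaining
\begin{equation*}
\sum_{s=1}^t\frac{f(x_s)-f(x^*)}{\eta_s^{k}}\le\frac12\sum_{s=1}^t\frac{d_s-d_{s+1}}{\eta_s^{k+1}}+\frac{L^2}{2}\sum_{s=1}^t\frac{1}{\eta_s^{k-1}} .
\end{equation*}
The second term already matches the claimed numerator, so the crux is to show $\sum_{s=1}^t(d_s-d_{s+1})/\eta_s^{k+1}\le R^2/\eta_t^{k+1}$. This is exactly where the hypotheses are used: $k\ge-1$ and $\{\eta_s\}$ positive non-increasing make $\{1/\eta_s^{k+1}\}$ a non-decreasing sequence. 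Summation by parts then gives
\begin{equation*}
\sum_{s=1}^t\frac{d_s-d_{s+1}}{\eta_s^{k+1}}=\frac{d_1}{\eta_1^{k+1}}+\sum_{s=2}^t d_s\Big(\frac{1}{\eta_s^{k+1}}-\frac{1}{\eta_{s-1}^{k+1}}\Big)-\frac{d_{t+1}}{\eta_t^{k+1}}\le\frac{R^2}{\eta_t^{k+1}},
\end{equation*}
where I used $d_s\le R^2$, the nonnegativity of each increment $1/\eta_s^{k+1}-1/\eta_{s-1}^{k+1}$, and $d_{t+1}\ge0$; the resulting sum telescopes to $R^2/\eta_t^{k+1}$.

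Combining the last two displays yields $\sum_{s=1}^t(f(x_s)-f(x^*))/\eta_s^{k}\le \tfrac12\big(R^2/\eta_t^{k+1}+L^2\sum_{s=1}^t 1/\eta_s^{k-1}\big)$. Finally, since $f(x_s)\ge f(x^*)$ for every $s$, I would divide by $\sum_{s=1}^t 1/\eta_s^{k}>0$ and apply Jensen's inequality to the convex function $f$ on $\mathcal{X}$ (the weighted average $(\sum_s x_s/\eta_s^k)/(\sum_s 1/\eta_s^k)$ is a convex combination of the $x_s\in\mathcal{X}$, hence lies in $\mathcal{X}$), which converts the left-hand side into $f\big((\sum_s x_s/\eta_s^k)/(\sum_s 1/\eta_s^k)\big)-f(x^*)$ and produces exactly \eqref{bound}.

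The only non-routine ingredient is the summation by parts, and specifically the observation that exploiting monotonicity of $1/\eta_s^{k+1}$ (rather than bounding $d_s/\eta_s^{k+1}$ term by term) keeps the first term at the sharp level $R^2/\eta_t^{k+1}$ instead of accumulating an extra sum; this is precisely the mechanism that removes the spurious $\log t$ of the naive estimate. Specializing \eqref{bound} to $k=0$ and $\eta_s=R/(L\sqrt s)$, then bounding $\sum_{s=1}^t 1/\sqrt s\le 2\sqrt t$, recovers Theorem \ref{main}.
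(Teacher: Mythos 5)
Your proof is correct and follows essentially the same route as the paper: the standard one-step subgradient estimate combined with nonexpansiveness of the projection, then weighting by $1/\eta_s^{k}$, summation by parts using the monotonicity of $1/\eta_s^{k+1}$ (which is where $k\ge-1$ and the non-increasing step-sizes enter) to keep the telescoped term at $R^2/\eta_t^{k+1}$, and Jensen's inequality on the left-hand side. The only cosmetic difference is that you expand $\|y_{s+1}-x^*\|^2$ directly while the paper invokes the polarization identity; these are algebraically identical.
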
	
\begin{proof}
	According to the definition of subgradient, we have
	\begin{eqnarray}
			f(x_s)-f(x^*)&\le& g_s^T(x_s-x^*) \nonumber \\
			&=& \frac{1}{\eta_s}(x_s-y_{s+1})^T(x_s-x^*) \nonumber\\
			&=& \frac{1}{2\eta_s}(\Vert x_s-y_{s+1} \Vert^2+\Vert x_s-x^* \Vert^2-\Vert y_{s+1}-x^* \Vert^2)\label{eq1}\\
			&=& \frac{1}{2\eta_s}(\Vert x_s-x^* \Vert^2-\Vert y_{s+1}-x^* \Vert^2)+\frac{\eta_s}{2}\Vert g_s \Vert^2 \nonumber\\
&\le& \frac{1}{2\eta_s}(\Vert x_s-x^* \Vert^2-\Vert x_{s+1}-x^* \Vert^2)+\frac{\eta_s}{2}L^2,\label{ineq1}
	\end{eqnarray}
where \eqref{eq1} follows from the elementary identity $2a^Tb=\Vert a\Vert^2+\Vert b\Vert^2-\Vert a-b\Vert^2$, and \eqref{ineq1} holds since $\Vert g_s \Vert \le L$ and
	\begin{equation*}
		\Vert y_{s+1}-x^* \Vert^2\ge \Vert x_{s+1}-x^* \Vert^2,
	\end{equation*}
which is implied by  the projection theorem.

Consequently, we have
	\begin{eqnarray}
&&\left(\sum_{s=1}^t\frac{1}{\eta_s^k}\right)\left(		f\left(\frac{\sum_{s=1}^t \frac{1}{\eta_s^k}x_s}{\sum_{s=1}^t \frac{1}{\eta_s^k}}\right)-f(x^*)\right) \nonumber \\
&\le&	\sum_{s=1}^t\frac{1}{\eta_s^k}(f(x_s)-f(x^*))  ~~~~({\rm since~}f{\rm~is~convex})\nonumber\\
&\le& \sum_{s=1}^t\frac{1}{2\eta_s^{k+1}}(\Vert x_s-x^* \Vert^2-\Vert x_{s+1}-x^* \Vert^2)+\sum_{s=1}^t\frac{1}{2\eta_s^{k-1}}L^2 \label{ineq2}\\
			&=& \frac{1}{2\eta_1^{k+1}}\Vert x_1-x^* \Vert^2+\sum_{s=2}^t(\frac{1}{2\eta_s^{k+1}}-\frac{1}{2\eta_{s-1}^{k+1}})\Vert x_s-x^* \Vert^2 \nonumber\\
&&-\frac{1}{2\eta_{t}^{k+1}}\Vert x_{t+1}-x^* \Vert^2+\sum_{s=1}^t\frac{1}{2\eta_s^{k-1}}L^2\nonumber\\
			&\le& \frac{R^2}{2\eta_1^{k+1}}+\frac{R^2}{2}\sum_{s=2}^t(\frac{1}{\eta_s^{k+1}}-\frac{1}{\eta_{s-1}^{k+1}})+
\sum_{s=1}^t\frac{1}{2\eta_s^{k-1}}L^2\label{ineq3}\\
			&=& \frac{R^2}{2\eta_t^{k+1}}+\sum_{s=1}^t\frac{1}{2\eta_s^{k-1}}L^2,\nonumber
		 \end{eqnarray}
where \eqref{ineq2} follows from \eqref{ineq1}, and \eqref{ineq3} holds since $1/\eta_s^{k+1}-1/\eta_{s-1}^{k+1}\ge0$ when $k\ge -1$. The proof is complete.  \qed
\end{proof}	

\begin{remark}\label{r1}
By setting $k=-1$ in Theorem \ref{Convergence}, the upper bound on the right-hand side \eqref{bound} simplifies to
	\begin{equation*}
\frac{R^2+L^2\sum_{s=1}^t\eta_s^2}{2\sum_{s=1}^t\eta_s},
	\end{equation*}
	which is exactly the same as the result presented in \cite{nesterov2018lectures}. Then PSG with the time-varying step-size \eqref{size2} satisfies
\begin{equation}
	f\left(\frac{\sum_{s=1}^t \frac{1}{ \sqrt{s}}x_s}{\sum_{s=1}^t \frac{1}{\sqrt{s}}}\right)-f(x^*)\le \frac{2RL+RL\log t}{4(\sqrt{t+1}-1)}, \label{bound2}
\end{equation}
which is sub-optimal. Note that computing the weighted average of the iterates in the second half of the sequence yields the optimal convergence rate \cite[Corollary 3.2]{Lan}.
\end{remark}

%

\begin{remark}\label{r3}
By setting $k=0$ in Theorem \ref{Convergence},  we can immediately obtain the optimal convergence rate, as presented in Theorem \ref{main}.
\end{remark}

\begin{remark}
By setting any $k$ such that $k>-1$ in Theorem \ref{Convergence},  the convergence rate of  $f((\sum_{s=1}^t  x_s/\eta_s^k)/(\sum_{s=1}^t 1/ \eta_s^k))-f(x^*)$ will be $\mathcal{O}(1/\sqrt{t})$, without the presence of a $\log t$ factor. In comparison with the sub-optimal case \eqref{bound2}, when $k>0$,  the
 weighting scheme $(\sum_{s=1}^t  x_s/\eta_s^k)/(\sum_{s=1}^t 1/ \eta_s^k)$ assigns smaller weights to the initial points and larger weights to the most recent points. This new result can be referred to as the ``weak'' ergodic convergence rate.
\end{remark}

\begin{remark}\label{r4}
	We can apply the same proof techniques to extend the conclusion of weak ergodic convergence to mirror descent, Nesterov's dual averaging, and other schemes with time-varying step sizes for solving nonsmooth convex optimization, see \cite{nesterov2018lectures,bubeck2015convex}.

\end{remark}


\section*{Funding}
This research was supported by National Key R\&D Program of China under grant 2021YFA1003300, the National Natural Science Foundation of China under grant 12171021, and the Fundamental Research Funds for the Central Universities.

\section*{Data Availability}
The manuscript has no associated data.


\begin{thebibliography}{9}
\bibitem{nesterov2018lectures}
Y. Nesterov, Lectures on convex optimization, volume 137, Springer, 2018.
\bibitem{bubeck2015convex}
S. Bubeck, Convex optimization: Algorithms and complexity. Foundations
and Trends Trends{\textregistered} in Machine Learning, 8(3-4): 231-357, 2015.
\bibitem{MITlecture}
P. Rigollet, K. Li, Convex optimization for machine learning. Massachusetts
Institute of Technology: MIT OpenCouseWare, https://ocw.mit.edu/, Oct. 2015.
\bibitem{Lan}
G. Lan, First-order and Stochastic Optimization Methods for Machine Learning, Springer-Nature, 2020
\end{thebibliography}

\end{document}